\def\mathcal{\mathscr}
\newtheorem{thm}{Theorem}[section]
\newtheorem{lem}[thm]{Lemma}
\newtheorem{cor}[thm]{Corollary}
\theoremstyle{definition}
\newtheorem{rem}[thm]{Remark}
\newtheorem{defn}[thm]{Definition}
\newcommand{\mca}[1]{{\mathcal{#1}}}
\def\Z{{\mathbb Z}}
\def\C{{\mathbb C}}
\def\R{{\mathbb R}}
\def\image{\text{\rm Im}\,}
\def\Diff{\text{\rm Diff}\,}
\def\dR{\text{\rm dR}}
\def\ep{\varepsilon} 
\def\ech{\text{\rm ECH}}
\def\fix{\text{\rm Fix}}
\def\Ham{\text{\rm Ham}\,}
\def\id{\text{\rm id}}
\def\interior{\text{\rm int}\,}
\def\PD{\text{\rm PD}}
\def\ph{\varphi} 
\def\supp{\text{\rm supp}\,}
\def\vol{\text{\rm vol}}
\begin{document}
\pagestyle{plain}
\thispagestyle{plain}

\title[A $C^\infty$ closing lemma for Hamiltonian diffeomorphisms of closed surfaces]
{A $C^\infty$ closing lemma for Hamiltonian diffeomorphisms of closed surfaces}

\author[Masayuki Asaoka]{Masayuki Asaoka}
\address{Department of Mathematics, 
Kyoto University,
Kyoto 606-8502, Japan}
\email{asaoka@math.kyoto-u.ac.jp}

\author[Kei Irie]{Kei Irie}
\address{Research Institute for Mathematical Sciences, Kyoto University,
Kyoto 606-8502, Japan
/
Simons Center for Geometry and Physics, State University of New York, 
Stony Brook, NY 11794-3636, USA (on visit)} 
\email{iriek@kurims.kyoto-u.ac.jp}

\subjclass[2010]{37J10, 37E30, 37J45} 

\begin{abstract}
We prove a $C^\infty$ closing lemma for Hamiltonian diffeomorphisms of closed surfaces. 
This is a consequence of a $C^\infty$ closing lemma for Reeb flows on closed contact three-manifolds, 
which was recently proved as an application of spectral invariants in embedded contact homology. 
A key new ingredient of this paper is an analysis of an area-preserving map near its fixed point, 
which is based on some classical results in Hamiltonian dynamics:
existence of KAM invariant circles for elliptic fixed points, and 
convergence of the Birkhoff normal form for hyperbolic fixed points. 
\end{abstract}

\maketitle

\section{Introduction} 

The aim of this paper is to prove a $C^\infty$ closing lemma for 
Hamiltonian diffeomorphisms of closed surfaces. 
Let us first introduce some notations. 
For any closed surface (i.e., $C^\infty$ two-manifold) $S$, 
let $\Diff(S)$ denote the group of all $C^\infty$ diffeomorphisms of $S$, equipped with the $C^\infty$ topology. 
For any $\ph \in \Diff(S)$, let $\fix(\ph)$ denote the set of fixed points of $\ph$, 
and $\mca{P}(\ph)$ denote the set of periodic points of $\ph$: 
\[ 
\fix(\ph):= \{ x \in S \mid \ph(x)= x\}, \qquad
\mca{P}(\ph):= \bigcup_{m=1}^\infty \fix(\ph^m).
\]
Also, the closure of $\{ x \in S \mid \ph(x) \ne x\}$ is called the support of $\ph$, and denoted as $\supp \ph$. 

When $S$ is equipped with an area form (i.e., nowhere vanishing $2$-form) $\omega$, let 
\[
\Diff(S,\omega):= \{ \ph  \in \Diff(S) \mid \ph^*\omega = \omega\}, 
\]
which is the group of area-preserving diffeomorphisms. 
For any $h \in C^\infty(S)$, we define its Hamiltonian vector field $X_h$ by $i_{X_h}\omega = -dh$. 
Our convention for the interior product $i$ is $i_{X_h} \omega (\, \cdot \,) = \omega(X_h, \, \cdot \,)$. 
For any $H \in C^\infty([0,1] \times S)$ and $t \in [0,1]$, we define $H_t \in C^\infty(S)$ by $H_t(x):= H(t,x)$, 
and $(\ph^t_H)_{t \in [0,1]}$ denotes the isotopy on $S$ defined by $\ph^0_H = \id_S$ and $\partial_t \ph^t_ H= X_{H_t}(\ph^t_H)$. 
Then we define 
\[
\Ham(S, \omega):= \{ \ph^1_H \mid  H \in C^\infty([0,1] \times S)\},
\]
which is the group of Hamiltonian diffeomorphisms. 
It is known that $\Ham(S,\omega)=\Diff(S,\omega)$ when $S$ is homeomorphic to the two-sphere. 

Throughout this paper, $\Ham(S, \omega)$ and $\Diff (S, \omega)$ are equipped with topologies induced 
from the $C^\infty$ topology on $\Diff(S)$. 
Now we can state our main result as follows:

\begin{thm}\label{151207_1}
Let $S$ be any closed, oriented surface, 
$\omega$ be any area form on $S$, and $\ph \in \Ham(S, \omega)$. 
For any nonempty open set $U \subset S$, 
there exists a sequence $(\ph_j)_{j \ge 1} $in $\Ham(S,\omega)$ such that 
$\mca{P}(\ph_j) \cap U \ne \emptyset$ for every $j \ge 1$ and $\lim_{j \to \infty} \ph_j= \ph$. 
\end{thm}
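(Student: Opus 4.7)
The plan is to reduce the statement to the $C^\infty$ closing lemma for Reeb flows on closed contact three-manifolds, which the paper quotes as a black-box input from recent ECH-spectral-invariant work. The reduction has three stages: first, associate to $(S,\omega,\ph)$ a closed contact three-manifold $(Y,\alpha)$ whose Reeb flow is a suspension of $\ph$ and in which periodic points of $\ph$ correspond to closed Reeb orbits; second, apply the Reeb closing lemma on $Y$ relative to an open set $V \subset Y$ that lifts $U \subset S$; third, convert the resulting perturbations of $\alpha$ back into perturbations of $\ph$ inside $\Ham(S,\omega)$.

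First I would dispose of the trivial case $\mca{P}(\ph) \cap U \ne \emptyset$, for which the constant sequence $\ph_j \equiv \ph$ works; so I may assume $\mca{P}(\ph) \cap U = \emptyset$. Writing $\ph = \ph^1_H$ for some $H \in C^\infty([0,1]\times S)$, I would build $(Y,\alpha)$ from $H$---for instance via a prequantization-type $S^1$-bundle over $S$ after scaling $\omega$ to have integral cohomology class, or via a mapping-torus construction---so that the Reeb flow of $\alpha$ suspends the flow $\ph^t_H$ and its closed orbits project to periodic points of $\ph$. Choosing a nonempty open subset $V \subset Y$ that projects into $U$ and applying the Reeb closing lemma yields contact forms $\alpha_j \to \alpha$ in $C^\infty(Y)$ whose Reeb flows each admit a closed orbit meeting $V$.

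The hard part will be the conversion step. A generic small perturbation $\alpha_j$ of $\alpha$ need not be of the special form associated to any new Hamiltonian $H_j$ on $S$, so $\ph_j$ cannot be read off directly; one has to modify $\alpha_j$, while keeping it $C^\infty$-close to $\alpha$ and still admitting a closed orbit through $V$, so that it does come from some $H_j$. The delicate region is a neighborhood of the closed Reeb orbits of $\alpha$ corresponding to fixed points of the iterates $\ph^k$, since there the ``fibered'' Hamiltonian structure is rigidly tied to the contact form, while away from those orbits there is enough flexibility to absorb a general small contact perturbation. This is where the new local analysis is needed. Near each elliptic fixed point of $\ph^k$ I would invoke KAM theory to produce a Cantor family of invariant circles together with adapted symplectic coordinates on which $\ph^k$ is an exact rotation; near each hyperbolic fixed point I would invoke convergence of the Birkhoff normal form to obtain a smooth linearizing chart in which $\ph^k$ is a product of one-dimensional contractions/expansions. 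In such adapted coordinates the perturbation $\alpha_j$ can be cut off, interpolated, or reshaped into a contact form that has the required Hamiltonian form, yielding $\ph_j \in \Ham(S,\omega)$ with $\ph_j \to \ph$ in $C^\infty$ and $\mca{P}(\ph_j) \cap U \ne \emptyset$, completing the proof.
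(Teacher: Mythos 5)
Your reduction breaks down already at Stage 1. On a \emph{closed} surface $\omega$ is not exact, so the mapping torus $Y=[0,1]\times S/(1,x)\sim(0,\ph(x))$ does not carry a contact form whose Reeb flow is the suspension of $\ph^t_H$ (one only gets a stable Hamiltonian structure), and the quoted closing lemma (Lemma \ref{151030_1}) is a statement about contact forms. The prequantization alternative does not help as stated either: the Reeb flow of a Boothby--Wang form is the fiber rotation, not a suspension of $\ph$, and you never set up the correspondence between its closed orbits and periodic points of $\ph$, nor the passage back from perturbed contact forms to elements of $\Ham(S,\omega)$. This obstruction is exactly why the paper does not work on the closed surface directly: it first proves a closing lemma (Lemma \ref{151207_2}) for area-preserving maps of a compact surface with boundary satisfying an exactness condition ($\ph^*\beta-\beta$ exact and $\ph=\id$ near $\partial S$), realizing such a map as the return map of a genuine Reeb flow on a closed contact three-manifold obtained by capping the mapping torus with a solid torus (following Hutchings), and only then reduces the closed-surface case to that setting.

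You have also misplaced the ``hard part'' and hence the role of the classical tools. In the paper there is no need to ``reshape'' a generic contact perturbation into Hamiltonian form: the ECH closing lemma already produces perturbations of the special conformal type $(1+t_jh)\lambda$ with $h$ supported in a flow box $\pi_-^{-1}(F(U))$ disjoint from the surface of section $\Sigma$, so $d\lambda_j|_\Sigma=d\lambda|_\Sigma$, the perturbed return map is automatically area-preserving for the same form, agrees with $\ph$ outside $U$, and, being supported in a disk, stays Hamiltonian. The genuine difficulty is the lack of exactness on a closed surface, and KAM circles and the convergent Birkhoff normal form are used precisely to fix that at a \emph{single} fixed point $q$ (supplied by the Arnold conjecture, with contractible orbit so that Lemma \ref{151208_1} gives exactness on $S\setminus\{q\}$): in the elliptic case a KAM invariant circle bounds an invariant disk $D$ to excise, and in the hyperbolic case Moser's convergent normal form (after first perturbing $\ph$ to be real-analytic near $q$) lets one blow up $q$ and extend the map across the resulting boundary circle. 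Your plan instead applies these tools near the orbits corresponding to \emph{all} fixed points of \emph{all} iterates $\ph^k$, which is unworkable: there may be infinitely many such points, KAM and the convergent normal form need nondegeneracy and real-analyticity that cannot be arranged at all of them simultaneously, and the proposed ``cut off, interpolate, or reshape $\alpha_j$'' step is not an argument. As it stands the proposal has no viable route from the Reeb closing lemma back to $\Ham(S,\omega)$.
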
 

Using standard arguments, 
we can prove a $C^\infty$ general density theorem for Hamiltonian diffeomorphisms.

\begin{cor}\label{160908_1} 
For any $(S, \omega)$ as in Theorem \ref{151207_1}, 
\[
\{ \ph \in \Ham (S, \omega) \mid \text{$\mca{P}(\ph)$ is dense in $S$} \, \}
\]
is residual (i.e., contains a countable intersection of open and dense sets) in $\Ham(S, \omega)$. 
\end{cor}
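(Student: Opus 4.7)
The plan is to derive Corollary \ref{160908_1} from Theorem \ref{151207_1} by a standard Baire category argument. Since $S$ is a second countable manifold, I would fix a countable basis $(U_n)_{n \ge 1}$ of nonempty open subsets of $S$, and observe
\[
\{\ph \in \Ham(S,\omega) \mid \mca{P}(\ph) \text{ is dense in } S\} = \bigcap_{n \ge 1} D_n, \qquad D_n := \{\ph \in \Ham(S,\omega) \mid \mca{P}(\ph) \cap U_n \ne \emptyset\}.
\]
By Baire's theorem it is then enough, for each $n$, to exhibit a dense open subset $O_n$ of $\Ham(S,\omega)$ contained in $D_n$.

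For the candidate $O_n$, I would take the set of $\ph \in \Ham(S,\omega)$ admitting some $x \in U_n$ and some $m \ge 1$ with $\ph^m(x) = x$ and $\det(d\ph^m_x - \id) \ne 0$. Openness is immediate from the implicit function theorem: such a non-degenerate fixed point of $\ph^m$ persists, and remains in $U_n$, under $C^1$-small (hence $C^\infty$-small) perturbations. The problem therefore reduces to showing that $O_n$ is dense in $\Ham(S,\omega)$.

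For density, I would proceed as follows. Given $\ph \in \Ham(S,\omega)$ and a $C^\infty$-neighborhood $V$ of $\ph$, Theorem \ref{151207_1} supplies $\ph' \in V$ with a periodic point $p \in U_n$; let $m$ be its minimal period. If $d(\ph')^m_p - \id$ is already invertible there is nothing to do. Otherwise, pick a small disk $W \ni p$ disjoint from the other points $\{(\ph')^j(p) \mid 1 \le j \le m-1\}$ of the orbit, and choose $\psi \in \Ham(S,\omega)$ supported in $W$ with $\psi(p) = p$; realizing $\psi$ as the time-one map of a cutoff quadratic Hamiltonian centered at $p$, I can prescribe $d\psi_p$ to be an arbitrary symplectic matrix close to $\id$. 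Since $W$ is disjoint from the rest of the orbit, $(\psi \circ \ph')^m(p) = p$ and $d(\psi \circ \ph')^m_p = d\psi_p \circ d(\ph')^m_p$, so a generic choice of $d\psi_p$ removes the eigenvalue $1$. Taking $\psi$ sufficiently close to $\id$ places $\psi \circ \ph'$ in $O_n \cap V$, and density follows.

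The only genuinely non-formal ingredient is this last local perturbation producing a non-degenerate periodic orbit; the rest (reduction to a countable basis, openness of $O_n$ via the implicit function theorem, and the Baire step) is bookkeeping once Theorem \ref{151207_1} is in hand.
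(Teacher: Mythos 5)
Your proof is correct and follows essentially the same Baire-category scheme as the paper: fix a countable basis $(U_n)$ and intersect the open dense sets $\mca{H}_{U_n}$ of Hamiltonian diffeomorphisms with a nondegenerate periodic orbit meeting $U_n$. The only difference is one of detail: the paper asserts density of $\mca{H}_{U_n}$ as an immediate consequence of Theorem \ref{151207_1}, tacitly invoking the standard local perturbation you spell out (a cutoff quadratic Hamiltonian supported in a small disk around one point of the orbit, disjoint from the remaining iterates, so as to post-compose $d(\ph')^m_p$ with a generic symplectic linear map close to $\id$ and remove the eigenvalue $1$); making that step explicit is a welcome clarification, not a departure.
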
 
\begin{proof}
For any nonempty open set $U \subset S$, 
let $\mca{H}_U$ denote the set consisting of $\ph \in \Ham(S,\omega)$ 
such that there exists a nondegenerate periodic orbit of $\ph$ which intersects $U$. 
Then, $\mca{H}_U$ is obviously open in $\Ham(S,\omega)$, 
and dense by Theorem \ref{151207_1}. 
Let  $(U_i)_{i \in  I}$ be any countable basis of open sets in $S$. 
Then $\bigcap_{i \in I} \mca{H}_{U_i}$  is residual in $\Ham(S,\omega)$, and 
if $\ph \in \bigcap_{i \in I} \mca{H}_{U_i}$ then $\mca{P}(\ph)$ is dense in $S$. 
\end{proof}

We can also prove a $C^r$ general density theorem ($1 \le r \le \infty$) for area-preserving diffeomorphisms of the two-sphere. 

\begin{cor}\label{160908_2} 
Let $r$ be a positive integer or $\infty$, and let $\Diff_r(S, \omega)$ denote the set of $C^r$ diffeomorphisms of $S$ preserving $\omega$. 
When $S$ is homeomorphic to the two-sphere, 
\[ 
\{ \ph \in \Diff_r(S, \omega) \mid \text{ $\mca{P}(\ph)$ is dense in $S$}\,\}
\]
is residual in $\Diff_r(S, \omega)$ with the $C^r$ topology.
\end{cor}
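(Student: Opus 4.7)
The plan is to reduce Corollary \ref{160908_2} to Corollary \ref{160908_1} using two ingredients: the equality $\Ham(S,\omega)=\Diff(S,\omega)$ for $S$ homeomorphic to the two-sphere (recalled before Theorem \ref{151207_1}), and a classical smoothing procedure for area-preserving diffeomorphisms. When $r=\infty$, $\Diff_\infty(S,\omega)=\Diff(S,\omega)=\Ham(S,\omega)$ and the statement is Corollary \ref{160908_1} verbatim, so in what follows assume $r$ is a positive integer.

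Imitating the proof of Corollary \ref{160908_1}, for each nonempty open $U\subset S$ let $\mca{H}_U^r\subset\Diff_r(S,\omega)$ denote the set of diffeomorphisms admitting a nondegenerate periodic orbit that meets $U$. The implicit function theorem applied to $\ph^m(x)=x$ at a nondegenerate fixed point of $\ph^m$ shows that $\mca{H}_U^r$ is $C^1$-open, hence $C^r$-open. For a countable basis $(U_i)_{i\in I}$ of open sets in $S$, every $\ph\in\bigcap_{i\in I}\mca{H}_{U_i}^r$ has $\mca{P}(\ph)$ dense in $S$, so the corollary will follow once each $\mca{H}_U^r$ is shown to be $C^r$-dense.

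For the density, given $\ph\in\Diff_r(S,\omega)$ and $\ep>0$, the idea is first to approximate $\ph$ in the $C^r$ norm by a $C^\infty$ area-preserving diffeomorphism $\psi_1$, and then appeal to Corollary \ref{160908_1}. To build $\psi_1$, mollify $\ph$ chart-wise to obtain a $C^\infty$ diffeomorphism $\tilde\ph$ which is $C^r$-close to $\ph$, and write $\tilde\ph^*\omega=f\omega$; since $\tilde\ph$ is a diffeomorphism of degree one, $\int_S f\omega=\int_S\omega$, so $(f-1)\omega$ is exact, and Moser's trick produces a $C^\infty$ diffeomorphism $\rho$ close to the identity with $(\tilde\ph\circ\rho)^*\omega=\omega$. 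Take $\psi_1:=\tilde\ph\circ\rho$. Because $S$ is homeomorphic to the two-sphere, $\psi_1\in\Diff(S,\omega)=\Ham(S,\omega)$, so Corollary \ref{160908_1} applied to $\psi_1$ and $U$ yields $\psi\in\Ham(S,\omega)$ which is $C^\infty$-close (in particular $C^r$-close) to $\psi_1$ and has a nondegenerate periodic orbit meeting $U$. Then $\psi\in\mca{H}_U^r$ and $\|\psi-\ph\|_{C^r}<\ep$.

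The main technical subtlety lies in the smoothing step: a naive Moser correction derived from a primitive of the $C^{r-1}$-small form $(f-1)\omega$ only controls $\rho$ in $C^{r-1}$, losing one derivative. The standard remedy is to choose the mollification parameter small enough that $\tilde\ph$ is $C^{r+1}$- (rather than merely $C^r$-) close to $\ph$, so that $(f-1)\omega$ is $C^r$-small and the resulting $\rho$ is genuinely $C^r$-close to the identity. This smoothing-plus-Moser construction is classical and is the only new ingredient needed beyond Corollary \ref{160908_1}.
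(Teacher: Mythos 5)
Your overall strategy matches the paper's: the $r=\infty$ case is Corollary \ref{160908_1} verbatim, and for finite $r$ one reduces to $r=\infty$ by using that $C^\infty$ area-preserving diffeomorphisms are $C^r$-dense among $C^r$ ones. You spell out (correctly) how this density combines with the $C^1$-openness of each $\mca{H}^r_U$ to give a residual set, which the paper leaves implicit; the paper simply cites Zehnder for the density and stops there.

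The gap is in your sketch of the smoothing step. If $\ph$ is only $C^r$, mollification produces $\tilde\ph$ converging to $\ph$ in $C^r$, but the phrase ``choose the mollification parameter small enough that $\tilde\ph$ is $C^{r+1}$-close to $\ph$'' has no meaning: $\ph$ has no $C^{r+1}$ norm, and the $C^{r+1}$ norms of $\tilde\ph$ in fact blow up as the parameter goes to zero. So the proposed remedy for the derivative loss fails. The correct fix is different: when solving $d\eta=(f-1)\omega$ one should not take a naive chart-by-chart primitive but instead solve $\Delta u=f-1$ on the closed surface and set $\eta=\ast du$; elliptic regularity then \emph{gains} a derivative, so $\eta$ is $C^r$-small even though $f-1$ is only $C^{r-1}$-small, and the Moser flow $\rho$ is $C^r$-close to the identity with no loss. (Zehnder's argument, which the paper cites, uses a generating-function construction that sidesteps this issue in yet another way.) With that repair your proof is complete; as written, the key analytic step does not go through.
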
 
\begin{proof} 
The case $r=\infty$ is immediate from Corollary \ref{160908_1}, since $\Diff_\infty(S, \omega) = \Ham (S, \omega)$. 
The case $1 \le r< \infty$ follows from the case $r=\infty$ and the fact that $\Diff_\infty(S, \omega)$ is dense in $\Diff_r(S,\omega)$ with the $C^r$ topology, 
which is proved in \cite{Zehnder}. 
\end{proof} 

\begin{rem}[Historical remarks]
The $C^1$ closing lemma (and general density theorem) was first proved for nonconservative dynamics by Pugh \cite{Pugh1} \cite{Pugh2}, 
and later proved for conservative dynamics by Pugh-Robinson \cite{PR}. 
In particular, \cite{PR} established the $C^1$ closing lemma for symplectic and volume-preserving diffeomorphisms in arbitrary dimensions. 
On the other hand, a $C^r$ closing lemma for $r \ge 2$ is not established except for a few cases (see \cite{Anosov_Zhuzhoma} Section 5), 
and it has been considered as an important open problem in the theory of dynamical systems (in particular, see Smale \cite{Smale} Problem 10). 

As for the $C^r$ general density theorem for area-preserving diffeomorphisms of closed surfaces (which is completely settled for the two-sphere by Corollary \ref{160908_2}), 
as far as the authors know 
the only affirmative result so far is that of Xia \cite{Xia} 
(which generalizes the previous result by Franks - Le Calvez \cite{FLC} for the two-sphere)
which states that for a $C^r$ generic area-preserving diffeomorphism of a closed surface (where $1 \le r \le \infty$), 
the union of (un)stable manifolds of its hyperbolic periodic points is dense in the surface. 
\end{rem} 

Theorem \ref{151207_1} is a consequence of 
a $C^\infty$ closing lemma for Reeb flows on closed contact three-manifolds (Lemma \ref{151030_1}), 
which was proved in \cite{Irie}. 
For the convenience of the reader, 
we sketch its proof in Section 2. 
The proof uses recent developments in quantitative aspects of embedded contact homology, in particular the result 
in \cite{CGHR} by 
Cristofaro-Gardiner, Hutchings and Ramos. 

In Section 3, we prove 
a $C^\infty$ closing lemma for area-preserving diffeomorphisms of a surface with boundary, 
which satisfy some technical conditions (Lemma \ref{151207_2}). 
The idea of the proof is to regard an area-preserving map as a ``return map'' of a certain Reeb flow, 
which is inspired by a recent paper \cite{Hutchings_Calabi} by Hutchings. 

In Section 4, we prove Theorem \ref{151207_1} using Lemma \ref{151207_2} and an analysis of an area-preserving map 
near its fixed point. 
We exploit some classical results in Hamiltonian dynamics: 
existence of KAM invariant circles for elliptic fixed points, and 
convergence of the Birkhoff normal form for hyperbolic fixed points. 

\section{Reeb flows on contact three-manifolds} 

Let $(M, \lambda)$ be a contact manifold, where $\lambda$ denotes the contact form, 
with the contact distribution $\xi_\lambda:= \ker \lambda$. 
The Reeb vector field $R_\lambda$ is defined by equations $\lambda(R_\lambda) = 1$, $d\lambda(R_\lambda, \cdot \, ) = 0$. 
Let $\mca{P}(M, \lambda)$ denote the set of periodic orbits of $R_\lambda$, namely 
\[
\mca{P}(M, \lambda):= \{ \gamma: \R/T_\gamma \Z \to M  \mid T_\gamma >0, \, \dot{\gamma} = R_\lambda(\gamma)\}.
\]
Lemma \ref{151030_1} below is proved as a claim in the proof of \cite{Irie} Lemma 3.1 (in a slightly weaker form). 
The aim of this section is to sketch its proof, referring to \cite{Irie} for details. 

\begin{lem}[\cite{Irie}]\label{151030_1}
Let $(M, \lambda)$ be a closed contact three-manifold. 
For any $h \in C^\infty(M, \R_{\ge 0}) \setminus \{0\}$, 
there exist $t \in [0,1]$ and 
$\gamma \in \mca{P}(M, (1+th)\lambda)$ which intersects $\supp h$. 
\end{lem}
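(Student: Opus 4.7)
The plan is to argue by contradiction using the ECH spectral invariants $c_k$ together with the Weyl-type asymptotic formula of Cristofaro-Gardiner, Hutchings and Ramos. Recall that to any closed contact three-manifold $(M, \lambda)$ one associates a nondecreasing sequence $0 \le c_1(\lambda) \le c_2(\lambda) \le \cdots$ of ECH spectral invariants satisfying (i) $C^0$-continuity and monotonicity in $\lambda$; (ii) for nondegenerate $\lambda$, each $c_k(\lambda)$ equals the total $\lambda$-action $\int_\gamma \lambda$ of some finite union $\gamma$ of closed Reeb orbits; and (iii) the Weyl law $\lim_{k \to \infty} c_k(\lambda)^2 / k = 2\,\vol(M, \lambda)$, where $\vol(M, \lambda) := \int_M \lambda \wedge d\lambda$.

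The point of contact with the problem is a one-line computation: for $\lambda_t := (1+th)\lambda$ one finds $\lambda_t \wedge d\lambda_t = (1+th)^2\, \lambda \wedge d\lambda$, so $\vol(M, \lambda_t) = \int_M (1+th)^2 \lambda \wedge d\lambda$ is strictly increasing in $t$ because $h \ge 0$ and $h \not\equiv 0$. In particular $\vol(M, \lambda_1) > \vol(M, \lambda_0)$.

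I would then assume, toward a contradiction, that for every $t \in [0,1]$ no $\gamma \in \mca{P}(M, \lambda_t)$ meets $\supp h$. Since $\lambda_t \equiv \lambda$ on $M \setminus \supp h$, the Reeb fields $R_{\lambda_t}$ and $R_\lambda$ agree there, so under this hypothesis every periodic orbit of $R_{\lambda_t}$ is actually a periodic orbit of $R_\lambda$, and its $\lambda_t$-action coincides with its $\lambda$-action. For nondegenerate $\lambda_t$, property (ii) forces $c_k(\lambda_t)$ into the set $\Sigma_k$ of possible sums of $\lambda$-actions of closed $R_\lambda$-orbits disjoint from $\supp h$. Approximating arbitrary $\lambda_t$ by nondegenerate forms and invoking the continuity part of (i), the continuous function $t \mapsto c_k(\lambda_t)$ takes values in $\overline{\Sigma_k}$, which is locally discrete, hence is constant. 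Combining $c_k(\lambda_1) = c_k(\lambda_0)$ for all $k$ with the Weyl law (iii) gives $\vol(M, \lambda_1) = \vol(M, \lambda_0)$, a contradiction.

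The main obstacle is executing the discreteness step rigorously when $\lambda_t$ is not assumed nondegenerate: the realization property (ii) is then unavailable and $\Sigma_k$ could a priori accumulate if $R_\lambda$ has many short orbits. I expect to handle this by a careful choice of nondegenerate approximations for which $c_k$ and the relevant action spectrum both converge, and by using the Weyl asymptotics to obtain a $k$-dependent upper bound on $c_k(\lambda_t)$ that confines the associated action sums to a compact interval, on which the action spectrum of $R_\lambda$ restricted to the complement of $\supp h$ is genuinely discrete.
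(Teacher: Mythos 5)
Your overall strategy matches the paper's: assume for contradiction that no closed Reeb orbit of $(1+th)\lambda$ meets $\supp h$, note that the orbit set and action spectrum are then independent of $t$, show that the continuous function $t \mapsto c(\lambda_t)$ is forced to be constant, and contradict the ECH Weyl law since $\vol(M, \lambda_1) > \vol(M, \lambda_0)$. The logical skeleton is exactly Irie's.

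The gap is the step you flagged yourself, and it is more serious than you acknowledge: the discreteness claim is false, and your proposed patch does not repair it. The set of finite sums of periods of closed Reeb orbits (even orbits avoiding $\supp h$) is \emph{not} locally discrete in general, and restricting to a compact interval does not help. Already for a nondegenerate example such as an irrational ellipsoid in $\R^4$, the action spectrum is $\{m a_1 + n a_2 \mid m,n \in \Z_{\geq 0}\}$ with $a_1/a_2 \notin \Q$, which is dense in $\R_{\geq 0}$ and in particular accumulates on every point of every compact subinterval. So the argument "continuous into a locally discrete set, hence constant" breaks even in the nondegenerate setting, independently of the approximation issue you raise. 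The correct replacement, and the one the paper uses, is the much weaker statement that the set
\[
\mca{A}(M,\lambda)_+ = \{0\} \cup \biggl\{ \sum_{i=1}^m T_{\gamma_i} \;\bigg|\; m \geq 1,\; \gamma_i \in \mca{P}(M,\lambda) \biggr\}
\]
has Lebesgue measure zero (Lemma \ref{151117_1}, proved via Sard's theorem to handle possibly uncountable families of degenerate orbits). A continuous real-valued function on an interval whose image has measure zero must be constant, since otherwise its image contains a nondegenerate interval by connectedness. Substituting "null set" for "locally discrete" is precisely what makes the argument go through without any nondegeneracy hypothesis or approximation scheme, and without needing the Weyl law to confine actions to a compact window. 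Also note that the paper phrases the spectrality property directly for arbitrary (possibly degenerate) contact forms — property (a) above — so there is no need for the nondegenerate-approximation detour you sketch; that detour was only needed because you were aiming for discreteness, which you cannot get.
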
 

Our proof of Lemma \ref{151030_1} is based on embedded contact homology (ECH). 
For any closed contact three-manifold $(M, \lambda)$ and 
$\Gamma \in H_1(M: \Z)$, 
this theory assigns
a $\Z/2\,$-vector space\footnote{ECH can be defined with $\Z$ coefficients, however $\Z/2$ coefficients are sufficient for our purpose.}
$\ech(M, \xi_\lambda, \Gamma)$, which is relatively $\Z\,$-graded 
if $c_1(\xi_\lambda) + 2 \PD(\Gamma) \in H^2(M: \Z)$ is torsion
($c_1$ denotes the first Chern class, and $\PD$ denotes the Poincar\'{e} dual). 
It is easy to see that such $\Gamma$ exists for any $(M,\xi_\lambda)$. 

For each $\sigma \in \ech(M, \xi_\lambda, \Gamma) \setminus \{0\}$, 
one can assign 
$c_\sigma(M, \lambda) \in \R_{\ge 0}$ (see \cite{Hutchings_Quantitative} Section 4.1), 
the associated \textit{ECH spectral invariant} 
(this term is not used in \cite{Hutchings_Quantitative}). 
One can prove the following properties: 
\begin{enumerate} 
\item[(a):] $c_\sigma(M, \lambda) \in \{0\} \cup \biggl\{ \sum_{i=1}^m T_{\gamma_i} \bigg{|} m \ge 1, \, \gamma_1, \ldots, \gamma_m \in \mca{P}(M, \lambda) \biggr\}.$ 
\item[(b):] If a sequence $(f_j)_{j \ge 1}$ in $C^\infty(M, \R_{>0})$ satisfies 
$\lim_{j \to \infty} \| f_j - 1 \|_{C^0} = 0$, then $\lim_{j \to \infty} c_\sigma(M, f_j \lambda) = c_\sigma(M, \lambda)$. 
\item [(c):] 
Let $\Gamma \in H_1(M: \Z)$ be such that $c_1(\xi_\lambda) + 2\PD(\Gamma)$ is torsion, and let $I$ denote an arbitrary absolute $\Z\,$-grading on 
$\ech(M, \xi_\lambda, \Gamma)$. 
\begin{enumerate}
\item[(i):] 
$\ech(M, \xi_\lambda, \Gamma)$ is unbounded from above with this $\Z\,$-grading. 
\item[(ii):] 
If $M$ is connected, 
for any sequence $(\sigma_k)_{k \ge 1}$ of nonzero homogeneous elements in $\ech(M, \xi_\lambda, \Gamma)$ 
satisfying $\lim_{k \to \infty} I(\sigma_k) = \infty$, there holds 
\[ 
\lim_{k \to \infty} \frac{ c_{\sigma_k}(M, \lambda)^2}{I(\sigma_k)} = \int_M \lambda \wedge d \lambda =: \vol(M, \lambda).
\]
\item[(iii):]
For any $h \in C^\infty(M, \R_{\ge 0}) \setminus \{0\}$, 
there exists $\sigma \in \ech(M, \xi_\lambda, \Gamma)$ such that 
$c_\sigma(M, (1+h)\lambda)   > c_\sigma(M, \lambda)$. 
\end{enumerate}
\end{enumerate} 

(a) is \cite{Irie} Lemma 2.4 (a special case is proved in \cite{CGH} Lemma 3.1). 
(b) is explained in \cite{CGH} Section 2.5 as the ``Continuity axiom''. 
(c)-(i) follows from Seiberg-Witten Floer theory (see \cite{CGH} Section 2.6). 
(c)-(ii) is \cite{CGHR} Theorem 1.3. 
(c)-(iii) follows from (i) and (ii), since $\vol(M, (1+h)\lambda) > \vol(M, \lambda)$. 

We also need 
Lemma \ref{151117_1} below, 
which is proved by elementary arguments using Sard's theorem
(see \cite{Irie} Section 2.1 for details). 

\begin{lem}[\cite{Irie} Lemma 2.2]\label{151117_1}
For any closed contact manifold $(M, \lambda)$, 
\[
\mca{A}(M, \lambda)_+:=  \{0\} \cup \biggl\{ \sum_{i=1}^m T_{\gamma_i} \bigg{|} m \ge 1, \, \gamma_1, \ldots, \gamma_m \in \mca{P}(M, \lambda) \biggr\}
\]
is a null (i.e., Lebesgue measure zero) set. 
\end{lem}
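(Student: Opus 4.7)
The approach is to realize elements of $\mca{A}(M,\lambda)_+$ as values of smooth functions whose derivatives vanish on the relevant subset, and then apply Sard's theorem. The starting point is the action identity: since $\lambda(R_\lambda)=1$, any Reeb orbit $\gamma:\R/T_\gamma\Z\to M$, reparameterized as $\tilde\gamma:\R/\Z\to M$ with $\tilde\gamma(u)=\gamma(uT_\gamma)$ and $\dot{\tilde\gamma}=T_\gamma R_\lambda(\tilde\gamma)$, satisfies $T_\gamma=\int_0^1\tilde\gamma^*\lambda$. The crucial infinitesimal consequence is period invariance under smooth deformations: for a smooth family $\{\tilde\gamma_s\}$ of such reparameterized Reeb orbits with $V_s=\partial_s\tilde\gamma_s$, Cartan's formula combined with $i_{R_\lambda}d\lambda=0$ and closedness of the loop gives
\[
\frac{d}{ds}T_s=\int_0^1 d\lambda(V_s,T_sR_\lambda)\,du+[\lambda(V_s)]_0^1=0.
\]

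Next I would reduce to finite dimensions using transverse sections. Cover $M$ by countably many small local transverse disks $\Sigma_\alpha$ to the Reeb vector field, on each of which, for every $k\ge 1$, the $k$-th Poincar\'e return map $P_\alpha^k$ and return time $\tau_{\alpha,k}$ are smooth on their open domain of definition. Each periodic Reeb orbit of period $T\le T_0$ passing through $\Sigma_\alpha$ with $k$ returns corresponds to a fixed point $x\in\fix(P_\alpha^k)$ with $\tau_{\alpha,k}(x)=T$. The period invariance applied to smooth variations within $\fix(P_\alpha^k)$ shows that the restriction of $\tau_{\alpha,k}$ to the fixed-point set has identically vanishing derivative. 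A Sard-type argument (e.g., stratifying $\fix(P_\alpha^k)$ into smooth submanifolds and applying Sard on each stratum, on which $\tau_{\alpha,k}$ is locally constant) then yields that $\tau_{\alpha,k}(\fix(P_\alpha^k))$ is null in $\R$; the countable union over $\alpha$ and $k$ gives $\{T_\gamma:\gamma\in\mca{P}(M,\lambda)\}$ null.

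The $m$-fold sums in the definition of $\mca{A}(M,\lambda)_+$ are handled by the same scheme on $m$-fold product sections, with the smooth sum function $(x_1,\ldots,x_m)\mapsto\sum_i\tau_{\alpha_i,k_i}(x_i)$: period invariance applied componentwise gives vanishing derivative on the product fixed-point set, and Sard gives null image. Since $\mca{A}(M,\lambda)_+$ is the countable union (over $m$, the choice of sections, and the iterates) of such null sets together with $\{0\}$, it is null. The main technical obstacle is the Sard-type step applied to $\fix(P_\alpha^k)$: this is in general merely a closed subset of $\Sigma_\alpha$ rather than a smooth submanifold, so passing from ``tangential derivative vanishes'' to ``image is null'' requires a nontrivial stratification or measure-theoretic input, which is where the ``elementary arguments using Sard's theorem'' of the statement must be carried out carefully.
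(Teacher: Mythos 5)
Your infinitesimal period-invariance computation is correct, and the overall skeleton (local transverse sections, iterated return maps $P_\alpha^k$ and return times $\tau_{\alpha,k}$, products to handle $m$-fold sums, countable unions) is the right one. However, the Sard step you propose has a genuine gap, and you yourself put your finger on it: $\fix(P_\alpha^k)$ is in general merely a closed subset of $\Sigma_\alpha$, with no smooth or stratified structure whatsoever --- it could, for instance, contain a fat Cantor set, at whose points the ``tangential derivative'' condition is vacuous. There is no general way to ``stratify $\fix(P_\alpha^k)$ into smooth submanifolds'' and Sard does not apply to the restriction of a smooth function to an arbitrary closed set. This is not a technical annoyance: for an arbitrary smooth flow the period spectrum \emph{can} have positive measure, so the argument must use the contact/symplectic structure more substantially than merely through the period-invariance lemma you stated.

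The fix --- and what the ``elementary argument using Sard's theorem'' actually consists of --- is to replace $\tau$ by a \emph{generating-function} modification whose \emph{full} differential (on the ambient open set, not just tangent to the fixed-point set) vanishes at every fixed point of $P$. The first variation of action along the flow gives the identity $d\tau = P^*(\lambda|_\Sigma) - \lambda|_\Sigma$ on the open domain of the return map. In local Darboux coordinates on $\Sigma$ in which $\lambda|_\Sigma = \sum_i p_i\,dq_i + dh$, and writing $P = (P_i, Q_i)$, set
\[
G \ := \ \tau \;-\; h\circ P \;+\; h \;-\; \sum_i P_i\,(Q_i - q_i).
\]
A direct computation using $d\tau = P^*\lambda|_\Sigma - \lambda|_\Sigma$ shows that at any fixed point $x$ of $P$ one has $dG(x) = 0$ (as a covector on the full tangent space $T_x\Sigma$) and $G(x) = \tau(x)$. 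Since $G$ is smooth on the \emph{open} domain of $P$, the Morse--Sard theorem applies verbatim and yields that the critical values of $G$ --- hence all periods arising from fixed points of $P$ --- form a null set. Products of such $G$'s handle the $m$-fold sums, and countable unions over charts, iterates, and $m$ complete the proof. So the missing idea is precisely this: do not try to run Sard on $\tau$ restricted to a possibly pathological closed set, but instead exhibit a smooth function on an open set whose critical-value set contains the spectrum; constructing that function requires exploiting that $P$ is an exact symplectomorphism of $(\Sigma, d\lambda|_\Sigma)$.
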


\begin{proof}[\textbf{Proof of Lemma \ref{151030_1}}]
We may assume that $M$ is connected, 
and we set $\lambda_t:= (1+th)\lambda$ for any $t \in [0,1]$. 
Suppose that the lemma does not hold, i.e., 
$\gamma \in \mca{P}(M, \lambda_t) \implies \image \gamma \cap \supp h= \emptyset$ for any $t \in [0,1]$. 
Then $\mca{P}(M, \lambda_t) = \mca{P}(M, \lambda)$ for any $t \in [0,1]$, 
since $R_{\lambda_t} = R_\lambda$ on $M \setminus \supp h$. 
Hence $\mca{A}(M, \lambda_t)_+ = \mca{A}(M, \lambda)_+$ for any $t \in [0,1]$. 

For any $\Gamma \in H_1(M: \Z)$, 
$\sigma \in \ech(M, \xi_\lambda, \Gamma)\setminus \{0\}$ and $t \in [0,1]$,  (a) shows that 
\[ 
c_\sigma(M, \lambda_t) \in \mca{A}(M, \lambda_t)_+ = \mca{A}(M, \lambda)_+. 
\] 
(b) shows that $c_\sigma(M, \lambda_t)$ is continuous on $t \in [0,1]$. 
On the other hand, $\mca{A}(M, \lambda)_+$ is a null set (Lemma \ref{151117_1}). 
Thus $c_\sigma(M, \lambda_t)$ is constant on $t \in [0,1]$, in particular we obtain 
$c_\sigma(M, (1+h)\lambda) = c_\sigma(M, \lambda)$ for any $\sigma$, which contradicts (c)-(iii). 
\end{proof} 

\section{Return maps of Reeb flows} 

The aim of this section is to prove Lemma \ref{151207_2} below. 
$\Omega^1$ denotes the set of $C^\infty$ $1$-forms. 

\begin{lem}\label{151207_2} 
Let $S$ be any compact, connected surface with boundary such that $\partial S$ is diffeomorphic to $S^1$. 
Let $\omega$ be any area form on $S$ and 
$\ph \in \Diff(S, \omega)$ such that: 
\begin{itemize}
\item $\ph \equiv \id_S$ near $\partial S$.
\item For any $\beta \in \Omega^1(S)$ such that $d\beta = \omega$, $\ph^*\beta - \beta$ is exact. 
\end{itemize} 
Then, for any nonempty open set $U$ in $\interior S:= S \setminus \partial S$, 
there exists a sequence $(\ph_j)_{j \ge 1} $in $\Diff(S,\omega)$ such that 
$\lim_{j \to \infty} \ph_j = \ph$, and for every $j \ge 1$ there holds
\[
\mca{P}(\ph_j) \cap U  \ne \emptyset, \qquad
\supp (\ph^{-1} \circ \ph_j) \subset U. 
\]
\end{lem}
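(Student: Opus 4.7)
Following a strategy inspired by Hutchings \cite{Hutchings_Calabi}, the plan is to realize $\ph$ as the first return map of the Reeb flow of a closed contact three-manifold, and then to apply Lemma \ref{151030_1}.

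The first step will be to build a contact ``mapping torus''. Fix a primitive $\beta$ of $\omega$ and choose $G \in C^\infty(S)$ with $\ph^*\beta - \beta = dG$; since $\ph \equiv \id$ near $\partial S$, $G$ is constant there, so after adding a large constant one may assume $F := G + C > 0$ on $S$. The map $\Psi(s,x) := (s - F(x), \ph(x))$ on $\R \times S$ preserves $ds + \beta$ and generates a free, proper $\Z$-action. The quotient $M_0 := (\R \times S)/\langle \Psi\rangle$ inherits a contact form $\lambda_0$ whose Reeb vector field is induced by $\partial_s$, and $x \mapsto [0,x]$ embeds $S$ in $M_0$ as a global surface of section with first return map $\ph$ and return time $F$. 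Because $F$ and $\ph$ are trivial near $\partial S$, $\partial M_0$ is a smooth torus to which the Reeb field is tangent. A standard open book / Thurston--Winkelnkemper construction should then cap $\partial M_0$ off by a contact solid torus, producing a closed contact three-manifold $(M, \lambda)$ extending $(M_0, \lambda_0)$; since the Reeb field is tangent to $\partial M_0$, any Reeb orbit of $\lambda$ starting in $M_0$ will remain in $M_0$.

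Next, I would pick $x_0 \in U$ and $s_0 \in (0, F(x_0))$, and choose $h_j \in C^\infty(M, \R_{\ge 0}) \setminus \{0\}$ supported in an open neighborhood $N_j$ of $[s_0, x_0]$ disjoint from $S$ and from $\partial M_0$, with $N_j$ shrinking to $\{[s_0, x_0]\}$ and $h_j \to 0$ in $C^\infty$. Lemma \ref{151030_1} will supply $t_j \in [0, 1]$ and a periodic Reeb orbit $\gamma_j \in \mca{P}(M, (1 + t_j h_j)\lambda)$ meeting $\supp h_j$, forcing $\gamma_j \subset M_0$. Writing $\lambda_j := (1 + t_j h_j)\lambda$, since $h_j$ vanishes on $S$ together with its tangential derivatives, the perturbed Reeb field agrees with $\partial_s$ along $S$, so $S$ remains transverse to it; the first return map $\ph_j$ of $\lambda_j$ on $S$ is therefore well defined, and $d\lambda_j|_S = \omega$, giving $\ph_j \in \Diff(S, \omega)$. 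Tracing $\gamma_j$ backward from a point in $\supp h_j$ for time close to $s_0$ shows that $\gamma_j$ meets $S$ at a point $y_j$ converging to $x_0$, so $y_j \in U$ for large $j$ and $y_j$ is periodic for $\ph_j$. For $x$ outside a small neighborhood $V_j$ of $x_0$, the perturbed Reeb orbit from $(0, x)$ avoids $\supp h_j$ before its first return, so $\ph_j(x) = \ph(x)$; choosing $V_j \subset U$ gives $\supp(\ph^{-1} \circ \ph_j) \subset U$. Finally $t_j h_j \to 0$ in $C^\infty$ yields $\ph_j \to \ph$ in $C^\infty$, and $(\ph_j)$ is the required sequence.

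The main technical hurdle will be the open book / Thurston--Winkelnkemper step that closes $(M_0, \lambda_0)$ into a smooth closed contact three-manifold $(M, \lambda)$; once this is in place, the remainder is a direct application of Lemma \ref{151030_1}, the key simplification being that localizing the perturbation $h_j$ away from $S$ automatically makes the perturbed return map preserve $\omega$, obviating any Moser correction.
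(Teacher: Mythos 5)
Your plan follows essentially the same route as the paper: build the contact mapping torus of $\ph$ with return time given by the generating function of $\ph^*\beta-\beta$, close it up to a closed contact three-manifold via an open-book type gluing, apply Lemma \ref{151030_1} with a perturbation supported in a flow box over $U$ and away from the page and binding, and read off the perturbed return map. The paper's Lemma \ref{151112_1} is exactly the capping step you defer to ``a standard Thurston--Winkelnkemper construction''; it is indeed the genuine technical content here (one must normalize the Liouville primitive to the model $ar^2\,d\theta$ near $\partial S$, arrange the constant of $\ph^*\beta-\beta$ near $\partial S$ to be a positive integer multiple of $a$, and glue in a solid torus carrying the binding Reeb orbit), and the paper handles it by adapting Hutchings' Proposition 2.1 rather than by a soft existence statement. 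Two further remarks. First, you vary both $h_j$ and $t_j$, whereas the paper fixes one $h$ with $\supp h\subset\pi_-^{-1}(F(U))$ and extracts $t_j\to0$ (by rescaling $h$ in Lemma \ref{151030_1}); both work, and the fixed-$h$ version is slightly cleaner because the set $K=\pi_-(\supp h)$ that carries $\supp(\rho_\lambda^{-1}\circ\rho_{\lambda_j})$ and the produced periodic points is a single compact subset of $F(U)$. Second, the sentence ``since the Reeb field is tangent to $\partial M_0$, any Reeb orbit of $\lambda$ starting in $M_0$ will remain in $M_0$'' is not quite meaningful after the gluing, since $\partial M_0$ is replaced by the binding circle in the closed $M$; what you actually need (and what the paper uses) is only that $\supp h_j$ is at positive distance from the page $\Sigma$ and from the binding, so that $\Sigma$ stays a global surface of section for $\lambda_j$ and $d\lambda_j|_\Sigma=d\lambda|_\Sigma$. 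With those points fixed, your outline matches the paper's argument.
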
 

Our idea to prove Lemma \ref{151207_2} is to realize $\ph|_{\interior S}$ as a ``return map'' of a certain Reeb flow. 
First we recall the following notion from \cite{HWZ_3D}. 

\begin{defn}\label{151112_2} 
Let $(M, \lambda)$ be a closed contact three-manifold, 
and let $(\ph^t)_{t \in \R}$ denote the flow on $M$ generated by $R_\lambda$, i.e., 
$\ph^0 = \id_M$ and $\partial_t \ph^t = R_\lambda(\ph^t)$. 
A \textit{global surface of section} in $(M, \lambda)$ is a 
compact surface $\Sigma$ with boundary, 
which is embedded in $M$ and satisfies the following conditions: 
\begin{itemize} 
\item Each connected component of $\partial \Sigma$ is a (image of) periodic orbit of $R_\lambda$. 
\item  $\interior \Sigma$ is transversal to $R_\lambda$.
\item For any $p  \in M \setminus \Sigma$, there exist $t_-(p) \in \R_{<0}$, $t_+(p) \in \R_{>0}$ such that 
$\ph^{t_-(p)}(p), \ph^{t_+(p)}(p) \in \Sigma$ and 
$t \in (t_-(p), t_+(p)) \implies \ph^t(p) \notin \Sigma$. 
\end{itemize} 
Let us define $\pi_\pm: M \setminus \Sigma \to \interior \Sigma$ by 
$\pi_\pm(p):= \ph^{t_\pm(p)}(p)$. 
We also define the \textit{return map}
$\rho_{M, \lambda, \Sigma}: \interior \Sigma \to \interior \Sigma$ so that
$\rho_{M, \lambda, \Sigma}(\pi_-(p)) = \pi_+(p)$ for any $p \in M \setminus \Sigma$. 
\end{defn}

It is easy to see that $\rho_{M, \lambda, \Sigma}$ preserves $d\lambda|_{\interior \Sigma}$. 
We abbreviate $\rho_{M,\lambda,\Sigma}$ as $\rho_\lambda$ when there is no risk of confusion. 

The next lemma is a small variation of \cite{Hutchings_Calabi} Proposition 2.1. 

\begin{lem}\label{151112_1} 
For any $(S, \omega, \ph)$ which satisfies the assumptions in Lemma \ref{151207_2}, 
there exists $(M, \lambda, \Sigma)$ such that 
$(\interior S, \omega, \ph|_{\interior S})$ is $C^\infty$ conjugate to 
$(\interior \Sigma, d\lambda, \rho_{M, \lambda, \Sigma})$. 
\end{lem}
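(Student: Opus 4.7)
The plan is to realize $(\interior S, \omega, \ph|_{\interior S})$ as the return-map data for the Reeb flow of a contact form on a closed $3$-manifold, built as the union of a contact mapping torus of $\ph$ with a solid torus glued along the boundary torus.

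Using the cohomological hypothesis, I would pick a primitive $\beta \in \Omega^1(S)$ of $\omega$ and a function $f \in C^\infty(S)$ with $\ph^*\beta - \beta = df$. Since $\ph \equiv \id$ on a neighborhood of the connected curve $\partial S \cong S^1$, $f$ is locally constant there, equal to some $f_0$; by adding a global constant to $f$ I may assume $f > 0$ on $S$. By Moser-type normalization on a collar $[0, 2\epsilon)_s \times S^1_\theta$ of $\partial S$, and modifying $\beta$ by an exact form, I would further arrange $\omega = ds \wedge d\theta$ and $\beta = (s + A)\, d\theta$ on this collar for some constant $A > 0$. Now set $M_\ph := (S \times \R)/\Z$, with $\Z$-action generated by $(x, t) \mapsto (\ph(x), t - f(x))$. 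The identity $\ph^*\beta - \beta = df$ makes $\tilde\lambda := \beta + dt$ invariant, so it descends to a contact form $\lambda_\ph$ on $M_\ph$ whose Reeb vector field is $\partial_t$; the surface $S \times \{0\}$ is transverse to the Reeb flow on its interior, with first-return map $\ph$ and return time $f$. The boundary is a single torus $\partial M_\ph \cong \partial S \times \R/(f_0 \Z)$ on which $\lambda_\ph = A\, d\theta + dt$.

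Next I would close up $M_\ph$ by gluing a solid torus $V := D^2_\rho \times S^1_\tau$ along $\partial V = \partial M_\ph$ under the identification $\phi \leftrightarrow \theta$, $\tau \leftrightarrow t/f_0$, equipped with a contact form $\lambda_V := g_1(\rho)\, d\phi + g_2(\rho)\, d\tau$. The smooth functions $g_1, g_2$ on $[0, 1]$ are chosen so that: (a) $g_1$ is an even function of $\rho$ with $g_1(0) = 0$, ensuring $\lambda_V$ is smooth across the core via the identity $\rho^2\, d\phi = x\, dy - y\, dx$; (b) $g_2(0) > 0$, making the core $\{\rho = 0\}$ a closed Reeb orbit; (c) the contact condition $g_1'g_2 - g_1 g_2' > 0$ holds throughout; and (d) the Taylor jets of $g_1, g_2$ at $\rho = 1$ (with $g_1(1) = A$, $g_2(1) = f_0$) are prescribed so as to match $\lambda_\ph$ across the gluing torus to all orders. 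Condition (d) may require a preliminary modification of $\lambda_\ph$ within a thin collar $\{s < \epsilon\}$ of $\partial M_\ph$ to rotate the Reeb direction from $\partial_t$ toward the $\theta$-circles; since $\ph = \id$ on this collar, this modification is nothing more than a smooth reparametrization. Elementary smooth interpolation produces such $g_1, g_2$, yielding a closed contact $3$-manifold $(M, \lambda)$.

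Finally, I would take $\Sigma$ to be the surface in $M$ obtained from $S \times \{0\}$ (with the collar reparametrized) by extending across the gluing region by an annulus terminating on the core of $V$. Then $\partial \Sigma$ is the core, a single closed Reeb orbit, and $\interior \Sigma$ is $C^\infty$ diffeomorphic to $\interior S$; the first-return map on $\interior \Sigma$ is conjugate to $\ph|_{\interior S}$ (tautologically outside the collar, and via a collar reparametrization within it), while the pullback of $d\lambda|_{\interior \Sigma}$ equals $\omega$ since $d\tilde\lambda|_{S \times \{0\}} = d\beta = \omega$. The main obstacle lies in the gluing: producing $\lambda_V$ with the correct behavior at the core, matching $\lambda_\ph$ to infinite order at the gluing torus, and arranging the preliminary collar modification so that the return dynamics on $\interior S$ is preserved up to the final conjugacy. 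This is the contact-geometric content of \cite{Hutchings_Calabi} Proposition 2.1, of which the present lemma is asserted to be a small variation.
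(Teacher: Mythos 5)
Your overall strategy — build the contact mapping torus $M_\ph$ of $\ph$ from a primitive $\beta$ of $\omega$ and the function $f$ with $\ph^*\beta-\beta=df$, then Dehn--fill the boundary torus with a solid torus whose core becomes a closed Reeb orbit and whose interior picks up a page $\Sigma$ with the right return map — is exactly the approach of the paper (following Hutchings' Proposition 2.1). However, two linked points in your gluing are wrong or unjustified, and they are precisely the points the paper is careful about.

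First, your identification $\phi\leftrightarrow\theta$, $\tau\leftrightarrow t/f_0$ has the meridian of $V$ (the $\phi$-circle, which bounds the disk $D^2\times\{\mathrm{pt}\}$) identified with the $\theta$-circle $\partial S\times\{0\}$. Then the natural extension of $S\times\{0\}$ into $V$ is the meridian \emph{disk}, not an annulus: $\Sigma$ comes out closed and passes through the core $C$ at one interior point, so $\interior\Sigma$ is compact while $\interior S$ is not, and $\partial\Sigma\ne C$. Your own description of $\Sigma$ as ``an annulus terminating on the core'' is only possible with the opposite framing: the meridian of $V$ must be identified with the direction of the Reeb orbits on $\partial M_\ph$ (the $t$-circles), and the longitude with $\theta$. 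In the paper this is the identification $\psi=2\pi t$, $\theta=\tau-ht$; the slice $\{t=0\}$ then continues into $Z$ as $\{\psi=0\}$, an annulus ending on $C$.

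Second, your remark that the collar modification needed to rotate the Reeb direction ``is nothing more than a smooth reparametrization'' is not correct, and it hides the one real difficulty. Changing $\lambda_\ph$ on the collar changes the Reeb field, and a Reeb field with a nonzero angular component near the core generically produces a nontrivial rotation as the first-return map on the annular end of $\Sigma$; a rotation is not conjugate to $\id$, whereas $\ph\equiv\id$ near $\partial S$, so the asserted conjugacy would fail. This is exactly why the paper adds a constant to $f$ to arrange $f\equiv ah$ near $\partial S$ with $h$ a \emph{positive integer}, normalizes $\beta=ar^2\,d\theta$ there, and glues with the $h$-fold Dehn twist $\theta=\tau-ht$: with those choices the contact form extends as $a\bigl((1-\rho^2)\,d\tau+\tfrac{h\rho^2}{2\pi}\,d\psi\bigr)$, the Reeb field remains a constant multiple of $\partial_t$ all the way to the core, and the return map on the annular end is exactly $\id$ because $\tau$ advances by the integer $h$ per meridian. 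Any version of your argument must either reproduce this integer normalization and twist, or prove (rather than assert) that the prescribed $g_1,g_2$ can be chosen so that the induced first-return map on the annular end of $\Sigma$ is smoothly conjugate to the identity relative to the boundary.
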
 
\begin{proof}
Let us take a Liouville vector field $V$ on $(S, \omega)$, i.e., 
$d(i_V \omega)=\omega$ and $V$ is outer normal to $\partial S$. 
We set $\beta:= i_V \omega$. 
There exists a local chart $(r,\theta)\, (\sqrt{1-\ep^2} \le r \le 1, \, \theta \in \R/\Z)$ 
near $\partial S$ such that 
$\partial S = \{r=1\}$ and $\beta = ar^2 d\theta$, 
where $a:= \int_S \omega$. 

Let $Y:= [0,1] \times S/ \sim$, 
where $\sim$ is defined as $(1,x) \sim (0, \ph(x))\, (x \in S)$. 
For any $t \in [0,1]$, we define an embedding $e_t: S \to Y$ by 
$e_t(x):= [(t,x)]$. 
Then, there exists a contact form $\lambda_Y$ on $Y$ such that: 
\begin{itemize} 
\item There exists $h \in \Z_{>0}$ such that $\lambda_Y= a (h dt + r^2 d\theta)$ near $\partial Y = [0,1] \times \partial S/\sim$. 
\item The Reeb vector field $R_{\lambda_Y}$ is parallel to $\partial_t$. 
\item $e_t^* d\lambda_Y = \omega$ for any $t \in [0,1]$. 
\end{itemize} 
$\lambda_Y$ is defined as follows: 
since
$\ph^* \beta - \beta$ is exact, there exists $f \in C^\infty(S)$ such that $\ph^*\beta - \beta = df$. 
$f$ is constant near $\partial S$, since $\ph \equiv \id_S$ near $\partial S$. 
By adding a constant, we may assume that $\min f>0$ and 
$f \equiv ah$ near $\partial S$ for some $h \in \Z_{>0}$. 
Now we can proceed in exactly the same way as the proof of \cite{Hutchings_Calabi} Proposition 2.1. 

Let $Z=\R/\Z \times \{z \in \C \mid |z|<\ep\}$, and $C:= \R/\Z \times \{0\} \subset Z$. 
We define $M:= \interior Y \sqcup Z/\sim$, where $\sim$ is defined as
\[
(t, r, \theta) \sim (\tau, z=\rho e^{\sqrt{-1}\psi}) \iff r^2 + \rho^2 = 1, \, \psi = 2\pi t, \, \theta = \tau - ht. 
\]
Then, it is easy to see that $\lambda_Y|_{\interior Y}$ extends to a $C^\infty$ contact form $\lambda$ on $M$, such that 
$C$ is a periodic orbit of $R_\lambda$. 
Finally, $\Sigma:= \{0\} \times \interior S \cup C$ 
is a global surface of section in $(M, \lambda)$, and 
$(\interior \Sigma, d\lambda, \rho_{M, \lambda, \Sigma})$ is conjugate to 
$(\interior S, \omega, \ph|_{\interior S})$ via $e_0|_{\interior S}$. 
\end{proof}

\begin{proof}[\textbf{Proof of Lemma \ref{151207_2}}]
By Lemma \ref{151112_1}, 
there exists $(M, \lambda, \Sigma)$ such that 
$(\interior \Sigma, d\lambda, \rho_{M, \lambda, \Sigma})$ is conjugate to 
$(\interior S, \omega, \ph|_{\interior S})$ via a diffeomorphism 
$F: \interior S \to \interior \Sigma$. 

Let us take $h \in C^\infty(M, \R_{\ge 0}) \setminus \{0\}$ such that 
$\supp h \subset \pi_-^{-1}(F(U))$. 
By Lemma \ref{151030_1}, 
there exists a sequence $(t_j)_{j \ge 1}$ in $\R_{>0}$ such that $\lim_{j \to \infty} t_j = 0$ and 
for every $j$ there exists $\gamma_j \in \mca{P}(M,  (1+t_jh)\lambda)$ which intersects $\supp h$. 

Let $\lambda_j:= (1+t_jh) \lambda$. 
Then $\Sigma$ is a global surface of section in $(M, \lambda_j)$ for any sufficiently large $j$, 
and $\lim_{j \to \infty} \rho_{\lambda_j} = \rho_\lambda$ in the $C^\infty$ topology. 
Let $K:= \pi_-(\supp h) \subset F(U)$. 
For any such $j$, there holds 
$\supp (\rho_\lambda^{-1} \circ \rho_{\lambda_j}) \subset K$ and 
$\mca{P}(\rho_{\lambda_j}) \cap K \ne \emptyset$. 
Also, $\rho_{\lambda_j}$ preserves $d\lambda_j|_{\interior \Sigma} = d\lambda|_{\interior \Sigma}$. 
Moreover, $F^{-1} \circ \rho_{\lambda_j} \circ F \in \Diff(\interior S, \omega)$ 
extends to $\ph_j \in \Diff(S, \omega)$ by 
setting $\ph_j|_{\partial S}:= \id_{\partial S}$, 
since $\supp (\rho_{\lambda}^{-1} \circ \rho_{\lambda_j})$ is compact. 
Then $(\ph_j)_j$ satisfies the requirements in the lemma. 
\end{proof} 

\section{Proof of Theorem \ref{151207_1}}

First we prove the following lemma. 

\begin{lem}\label{151205_1} 
Let $\omega$ be any area form on $A := [0, 1] \times S^1$, 
and $U$, $V$ be open neighborhoods of $\{1\} \times S^1$ which are disjoint from $\{0\} \times S^1$. 
For any diffeomorphism 
$\psi: U \to V$ which satisfies $\psi^*\omega = \omega$, 
there exists $\bar{\psi}  \in \Diff(A, \omega)$ which satisfies 
$\bar{\psi}  \equiv \psi$ near $\{1\} \times S^1$
and $\bar{\psi}  \equiv \id$ near $\{0\} \times S^1$. 
\end{lem}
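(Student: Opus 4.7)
My plan is to construct $\bar\psi$ in two stages: first produce a smooth (not necessarily area-preserving) diffeomorphism $\tilde\psi$ of $A$ that agrees with $\psi$ near $\{1\} \times S^1$ and with $\id_A$ near $\{0\} \times S^1$, then correct $\tilde\psi$ by post-composing with a compactly-supported Moser diffeomorphism to make it area-preserving.

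For the rough extension, I note that $\psi$ is orientation-preserving (being area-preserving) and sends $\{1\} \times S^1$ to itself, since a diffeomorphism of open subsets of $A$ sends boundary to boundary. I would choose a thin collar $N = [1 - \delta, 1] \times S^1$ with $\delta$ small enough that both $N$ and $\psi(N)$ lie inside $U \cap V$. By connectedness of the space of orientation-preserving embeddings $N \hookrightarrow A$ sending $\{1\} \times S^1$ to itself, there is a smooth isotopy of embeddings $\{\psi_s : N \hookrightarrow A\}_{s \in [0,1]}$ with $\psi_0$ the inclusion and $\psi_1 = \psi|_N$. I would extend the generating vector field $X_s := (\partial_s \psi_s) \circ \psi_s^{-1}$ to $A$ and multiply by a smooth bump function $\chi$ equal to $1$ on a thinner collar $[1-\delta/4, 1] \times S^1$ and to $0$ outside $N$, then integrate the resulting time-dependent vector field to obtain $\tilde\psi$. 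For $\delta$ small enough, trajectories starting where $\chi = 1$ will remain there throughout $s \in [0,1]$, so the truncation does not disturb the time-$1$ map near $\{1\} \times S^1$; consequently $\tilde\psi \equiv \psi$ near $\{1\} \times S^1$ and $\tilde\psi \equiv \id_A$ near $\{0\} \times S^1$.

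For the Moser correction, observe that $\tilde\psi^*\omega = \omega$ near $\partial A$, so $\eta := \tilde\psi^*\omega - \omega$ is compactly supported in $\interior A$; and $\int_A \eta = 0$ since $\tilde\psi$ is a diffeomorphism. Hence $[\eta] = 0$ in $H^2_c(\interior A) \cong \R$, and I can pick a compactly supported $1$-form $\alpha$ on $\interior A$ with $d\alpha = \eta$. The family $\omega_t := \omega + t\eta$ is a smooth path of area forms on $A$ (each $\omega_t$ is a convex combination of two co-oriented positive forms, and equals $\omega$ near $\partial A$), so the Moser equation $i_{X_t}\omega_t = -\alpha$ has a solution $X_t$ with compact support in $\interior A$. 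Its time-$1$ flow $\chi \in \Diff(A)$ satisfies $\chi^*(\tilde\psi^*\omega) = \omega$ and $\chi = \id_A$ near $\partial A$, so setting $\bar\psi := \tilde\psi \circ \chi$ gives $\bar\psi^*\omega = \omega$ together with $\bar\psi \equiv \tilde\psi$ near $\partial A$, yielding the desired extension.

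The main obstacle I expect is the verification at the end of the first step: one must check that for suitably small $\delta$ the truncated flow really agrees with $\psi$ near $\{1\} \times S^1$. This requires controlling the radial displacement of the trajectories of $X_s$ during $s \in [0,1]$, which can be arranged by shrinking $\delta$ and if necessary reparameterizing so that the entire isotopy stays confined to a thin radial slab where $\chi \equiv 1$. Once this topological extension is secured, the Moser correction is a routine application of the standard compactly-supported form of Moser's trick.
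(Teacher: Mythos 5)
Your proof is correct and takes essentially the same two-stage approach as the paper: first extend $\psi$ to a smooth (not necessarily area-preserving) diffeomorphism of $A$ fixing the ends, then correct it by the compactly supported Moser trick, using $\int_A(\tilde\psi^*\omega-\omega)=0$ to trivialize the class in $H^2_{c,\dR}(\interior A)$. The paper handles the first stage more tersely (simply invoking that orientation-preserving diffeomorphisms of $S^1$ are isotopic to the identity to produce $f\in\Diff(A)$), but the substance is the same.
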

\begin{proof}
Since any orientation-preserving diffeomorphism on $S^1$ is smoothly isotopic to $\id_{S^1}$, 
$\psi$ is smoothly isotopic to $\id$ near $\{1\} \times S^1$. 
Hence there exists $f \in \Diff(A)$ 
such that $f \equiv \psi$ near $\{1\} \times S^1$ and $f \equiv \id$ near $\{0\} \times S^1$. 

$(f^*\omega - \omega)|_{\interior A}$ represents $0$ in 
$H^2_{c,\dR}(\interior A)$. 
Thus there exists $\eta \in \Omega^1(A)$, which vanishes near $\partial A$ and satisfies $d\eta=f^*\omega - \omega$. 
For any $t \in [0,1]$, let $\omega_t:= \omega + t (f^*\omega - \omega)$, 
and define a vector field $X_t$ by $i_{X_t}\omega_t = \eta$. 
Then $X_t \equiv 0$ near $\partial A$. 
Let $(g_t)_{t \in [0,1]}$ be the isotopy on $A$ defined by $g_0 = \id_A$ and $\partial_t g_t = X_t(g_t)$. 
Then, $\bar{\psi} := f \circ g_1$ satisfies the requirements in the lemma. 
\end{proof} 

Let us start the proof of Theorem \ref{151207_1}. 
We may assume that $S$ is connected. 
Let $\ph \in \Ham(S,\omega)$, and take $H \in C^\infty([0,1] \times S)$ such that $\ph = \ph^1_H$. 
By the Arnold conjecture for surfaces (see \cite{Floer_surface} and the references therein), 
there exists $q \in \fix(\ph)$ such that
$(\ph^t_H(q))_{t \in [0,1]}$ forms a contractible loop on $S$. 

\begin{lem}\label{151208_1} 
For any $\beta \in \Omega^1(S \setminus \{q\})$ such that $d\beta=\omega$, 
$\ph^*\beta - \beta$ is exact. 
\end{lem}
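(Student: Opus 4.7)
My plan is to show that every period $\int_\gamma (\ph^*\beta - \beta)$ over a loop $\gamma$ in $S \setminus \{q\}$ vanishes; combined with the fact that $\ph^*\beta - \beta$ is closed on $S \setminus \{q\}$ (since $\ph^*\omega = \omega$), this yields exactness.

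Fix a smooth loop $\gamma \colon S^1 \to S \setminus \{q\}$ and form the cylinder $C_\gamma \colon [0,1] \times S^1 \to S$, $(t, s) \mapsto \ph^t_H(\gamma(s))$, so $\partial C_\gamma = \ph \circ \gamma - \gamma$. The standard Hamiltonian flux computation
\[
\omega(\partial_t C_\gamma, \partial_s C_\gamma) = -dH_t(\partial_s C_\gamma) = -\partial_s(H_t \circ C_\gamma)
\]
gives $\int_{C_\gamma} \omega = 0$. Writing $c := \tfrac{1}{2\pi}\int_\mu \beta$ for the residue of $\beta$ at $q$ (with $\mu$ a small positively oriented loop around $q$), and perturbing $\gamma$ within $S \setminus \{q\}$ so that $C_\gamma$ is transverse to $\{q\}$ (which preserves the period since the form is closed), Stokes' theorem applied to $C_\gamma$ with small disks around each preimage of $q$ excised yields, in the limit,
\[
\int_\gamma (\ph^*\beta - \beta) \;=\; \int_{\partial C_\gamma} \beta \;=\; \int_{C_\gamma} \omega + 2\pi c \cdot n(\gamma) \;=\; 2\pi c \cdot n(\gamma),
\]
where $n(\gamma) \in \Z$ is the algebraic count of preimages of $q$ under $C_\gamma$.

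It remains to prove $n(\gamma) = 0$. The preimages of $q$ are the pairs $(t, s)$ with $\gamma(s) = \alpha(t)$, where $\alpha(t) := (\ph^t_H)^{-1}(q)$ is a loop at $q$. Differentiating $\ph^t_H(\alpha(t)) = q$ gives $d\ph^t_H \cdot \alpha'(t) = -X_{H_t}(q)$; combined with $(\ph^t_H)^*\omega = \omega$, this identifies the local sign of $C_\gamma$ at each preimage with the sign of $\omega(\gamma'(s), \alpha'(t))$, so $n(\gamma) = [\gamma] \cdot [\alpha]$ is the homological intersection in $S$. To show $[\alpha] = 0$, consider the two-parameter map
\[
h(t, s) := \ph^s_H\bigl((\ph^t_H)^{-1}(q)\bigr)
\]
restricted to the triangle $\{0 \le s \le t \le 1\}$: along $s = 0$ it traces $\alpha$, along $t = 1$ it traces $\gamma_q$ (using $\ph(q) = q$), and along the diagonal $s = t$ it is constantly $q$, exhibiting a null-homotopy of the concatenation $\alpha \cdot \gamma_q$. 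Hence $[\alpha] = -[\gamma_q]$ in $H_1(S)$, which vanishes by the contractibility of $\gamma_q$, so $n(\gamma) = 0$.

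The main obstacle is the careful bookkeeping of signs and orientations in the residue Stokes computation and in the identification $n(\gamma) = [\gamma] \cdot [\alpha]$; this is also precisely where the contractibility of $\gamma_q$ provided by the Arnold conjecture enters essentially, since otherwise the residue term $2\pi c \cdot n(\gamma)$ would generically be nonzero.
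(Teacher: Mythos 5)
Your proof is correct, but it takes a genuinely different route than the paper's. Both proofs reduce to showing the period $\int_\gamma(\ph^*\beta - \beta)$ vanishes for every loop $\gamma$ in $S\setminus\{q\}$, and both form the cylinder $\Gamma(t,\theta)=\ph^t_H(\gamma(\theta))$ with $\int_\Gamma\omega=0$; the difference is how the possible presence of $q$ in the image of $\Gamma$ is handled. The paper constructs a homotopy $\Gamma_s$ of $\Gamma$ \emph{rel boundary} to a map $\Gamma_1$ whose image avoids $q$: from a null-homotopy $C(s,t)$ of the loop $\gamma_q(t)=\ph^t_H(q)$ it builds a two-parameter family of isotopies $\Phi_{s,t}$ (with $\Phi_{s,0}=\Phi_{s,1}=\id_S$) moving $\ph^t_H(q)$ onto $q$, sets $\Gamma_s(t,\theta)=\Phi_{s,t}(\Gamma(t,\theta))$, and concludes $\int_\gamma(\ph^*\beta-\beta)=\int_{\Gamma_1}\omega=\int_{\Gamma}\omega=0$ with no residue ever appearing. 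You instead keep $\Gamma$ fixed, compute the residue contribution $2\pi c\cdot n(\gamma)$ by excising disks around preimages of $q$, and show $n(\gamma)=[\gamma]\cdot[\alpha]=0$ via the triangle $h(t,s)=\ph^s_H((\ph^t_H)^{-1}(q))$ that null-homotopes $\alpha\cdot\gamma_q$. These are essentially dual views of the same fact --- the relative class of $\Gamma$ in $H_2(S,S\setminus\{q\})$ is trivial --- but the paper's version avoids all transversality and sign bookkeeping, while yours makes explicit where the residue of $\beta$ at $q$ would enter and why contractibility of $\gamma_q$ kills it. One point to tighten: since $\alpha'(t)=-(d\ph^t_H)^{-1}X_{H_t}(q)$, the loop $\alpha$ fails to be immersed exactly at $t$ with $X_{H_t}(q)=0$, and at such $t$ no perturbation of $\gamma$ makes $C_\gamma$ transverse to $q$; you should perturb $\gamma$ (within $S\setminus\{q\}$, which preserves the period since $\ph^*\beta-\beta$ is closed) so that it also avoids the measure-zero set $\alpha(\{t:\alpha'(t)=0\})$ and meets $\alpha$ transversally elsewhere, after which $n(\gamma)$ is well defined and equals $[\gamma]\cdot[\alpha]$.
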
 
\begin{proof}
It is sufficient to show that any $\gamma: \R/\Z \to S \setminus \{q\}$ satisfies $\int_\gamma \ph^*\beta - \beta =0$.
Let us define $\Gamma: [0,1] \times \R/\Z \to S$ by 
$\Gamma(t,\theta):= \ph^t_H(\gamma(\theta))$, then $\int_\Gamma \omega = 0$. 
It is sufficient to prove the following claim: 
\begin{quote}
\textbf{Claim:} 
There exists a smooth family of maps $(\Gamma_s: [0,1] \times \R/\Z \to S)_{s \in [0,1]}$ such that 
$\Gamma_0=\Gamma$, 
$q \notin \image \Gamma_1$, 
and $\Gamma_s|_{\{0,1\} \times \R/\Z} = \Gamma|_{\{0,1\} \times \R/\Z}$ for any $s \in [0,1]$. 
\end{quote}
Indeed, once we have established the claim, we can complete the proof by 
\[
\int_\gamma \ph^*\beta - \beta = \int_{\Gamma_1} d\beta = \int_{\Gamma_1} \omega = \int_{\Gamma_0} \omega = 0. 
\]

Now let us prove the above claim. 
Since $(\ph^t_H(q))_{t \in [0,1]}$ forms a contractible loop on $S$, there exists a smooth map 
$C: [0,1]^2 \to S$ such that
\[
C(0,t)= \ph^t_H(q), \quad C(1,t) = q, \quad C(s,0) = C(s,1) = q \qquad(\forall s,t \in [0,1]). 
\] 
Then there exists a smooth family of vector fields $(\xi_{s,t})_{(s,t) \in [0,1]^2}$, where $\xi_{s,t}$ is a smooth vector field on $S$ for each $(s,t) \in [0,1]^2$, 
such that 
\[ 
\xi_{s,0}= \xi_{s,1}=0\quad (\forall s \in [0,1]), \quad  
\xi_{s,t}(C(s,t)) = \partial_s C(s,t) \quad(\forall  s, t \in [0,1]). 
\] 
Let $(\Phi_{s,t})_{(s,t) \in [0,1]^2}$ denote the smooth family of isotopies on $S$ defined by 
\[ 
\Phi_{0,t} = \id_S \quad(\forall t \in [0,1]), \quad 
\partial_s \Phi_{s,t} = \xi_{s,t}(\Phi_{s,t}) \quad(\forall s,t \in [0,1]). 
\] 
Then it is easy to see that 
\[
C(s,t) = \Phi_{s,t}(\ph^t_H(q)) \quad( \forall s,t \in [0,1]), \qquad
\Phi_{s,0} = \Phi_{s,1} = \id_S \quad( \forall s \in [0,1]). 
\] 

Now let us define $\Gamma_s$ by $\Gamma_s(t,\theta):= \Phi_{s,t}(\Gamma(t,\theta))$. 
The properties $\Gamma_0 = \Gamma$ and $\Gamma_s|_{\{0,1\} \times \R/\Z} = \Gamma|_{\{0,1\} \times \R/\Z}\, (\forall s \in [0,1])$ are easy to see. 
The property $q \notin \image \Gamma_1$ can be confirmed by 
\[ 
\Gamma_1(t,\theta) = \Phi_{1,t}(\Gamma(t,\theta)) = \Phi_{1,t} \circ \ph^t_H (\gamma(\theta)) \ne \Phi_{1,t} \circ \ph^t_H(q) = C(1,t) = q. 
\] 
The middle inequality follows since $\Phi_{1,t}$ and $\ph^t_H$ are bijections and $q \notin \image \gamma$. 
\end{proof} 

Let $U$ be any nonempty open set in $S$. 
We may assume that $U$ is diffeomorphic to $\R^2$ and 
$\bar{U}$ (the closure of $U$ in $S$) is disjoint from $q$. 
We are going to show that, there exists a sequence $(\ph_j)_j$ in $\Ham(S, \omega)$ which satisfies 
the requirements in Theorem \ref{151207_1}. 

Let us take a local chart $(x,y)$ near $q$ such that $q=(0, 0)$ and $\omega=dx \wedge dy$. 
By adding a $C^\infty$ small perturbation to $\ph$, 
we may assume that the following conditions are satisfied: 
\begin{itemize}
\item The eigenvalues of $d\ph(q)$ are in 
$\{ z \in \C \mid z^3 \ne 1, \, z^4 \ne 1 \}$. 
In particular $q$ is a nondegenerate fixed point of $\ph$. 
\item 
With respect to the local chart $(x,y)$, $\ph$ is real-analytic near $(0,0)$. 
\end{itemize} 
By the first condition, the fixed point $q$ is either 
hyperbolic (i.e., the eigenvalues of $d\ph(q)$ are in $\R \setminus \{ \pm 1\}$) or 
elliptic (i.e., the eigenvalues are in $\{z \in \C \mid |z|=1\} \setminus \{\pm 1\}$). 
We consider the two cases separately. 

\textbf{The case $q$ is hyperbolic} 

According to the result by Moser (\cite{Moser} Theorem 1), 
there exists a local chart $(X,Y)$ defined near $q$ such that
$q=(0,0)$, $\omega=dX \wedge dY$ and 
\[
\ph(X, Y) = (u(XY) X,  u(XY)^{-1}Y), 
\]
where $u(t)$ is a real-analytic function defined near $t=0$. 
Notice that $u(0)$ is an eigenvalue of $d\ph(q)$, in particular nonzero. 

Let us take sufficiently small $\ep>0$, and let 
$U_\ep:= \{(X,Y) \mid X^2+Y^2<\ep\}$. 
Let us define a diffeomorphism 
$F: (0, \ep/2) \times \R/2\pi \Z \to U_\ep \setminus \{q\}$ by
\[
F(r, \theta):= \sqrt{2r} (\cos \theta, \sin \theta), 
\]
then $F^*(dX \wedge dY) = dr \wedge d\theta$, 
and $F^{-1} \circ \ph \circ F$ is defined on $(0, \delta) \times \R/2\pi \Z$ for sufficiently small $\delta>0$.
Let us define $R$ and $\Theta$ by 
\[
(R(r, \theta), \Theta(r, \theta)):= F^{-1} \circ \ph \circ F(r, \theta). 
\]
Setting $v(r, \theta):= u(r \cdot \sin 2\theta)$, 
direct computations show 
\begin{align*} 
R(r, \theta) &= r \cdot (v(r, \theta)^2 \cos^2\theta + v(r, \theta)^{-2} \sin^2\theta), \\
\tan \Theta(r, \theta) &= v(r, \theta)^{-2} \tan \theta.
\end{align*} 
Hence, for sufficiently small $\delta'>0$, 
$F^{-1} \circ \ph \circ F$ uniquely extends to $(-\delta', \delta) \times \R/2\pi\Z$
as a real-analytic map, which we denote by $\psi$. 
Let $\omega_0:=dr \wedge d\theta$.
Since $\psi^*\omega_0 - \omega_0$ is real-analytic and vanishes on $(0, \delta) \times \R/2\pi\Z$, 
it vanishes on $(-\delta', \delta) \times \R/2\pi\Z$, thus $\psi^*\omega_0 = \omega_0$. 
By Lemma \ref{151205_1}, there exists $\bar{\psi}  \in \Diff([-1,0] \times \R/2\pi \Z, \omega_0)$ such that 
$\bar{\psi}  \equiv \psi$ near $\{0\} \times \R/2\pi \Z$, and 
$\bar{\psi}  \equiv \id$ near $\{-1\} \times \R/2\pi \Z$. 

Let $\bar{S}:= ([-1, \delta) \times \R/2\pi \Z) \cup_F (S \setminus \{q\})$. 
We define an area form $\bar{\omega}$ on $\bar{S}$ by 
\[
\bar{\omega}|_{[-1, \delta) \times \R/2\pi \Z} = \omega_0, \qquad 
\bar{\omega}|_{S \setminus \{q\}} = \omega.
\]
We also define $\bar{\ph} \in \Diff(\bar{S}, \bar{\omega})$ by
$\bar{\ph}|_{[-1,0] \times \R/2\pi \Z} = \bar{\psi} $ and 
$\bar{\ph}|_{S \setminus \{q\}} = \ph$. 

By Lemma \ref{151208_1}, one can apply 
Lemma \ref{151207_2} for 
$(\bar{S}, \bar{\omega}, \bar{\ph})$. 
Then there exists a sequence $(\bar{\ph}_j)_j$ in $\Diff(\bar{S}, \bar{\omega})$ such that 
$\supp (\bar{\ph}^{-1} \circ \bar{\ph}_j) \subset U$, 
$\mca{P}(\bar{\ph}_j) \cap U \ne \emptyset$ for every $j$, 
and $\lim_{j \to \infty} \bar{\ph}_j = \bar{\ph}$. 
For each $j$, $\bar{\ph}_j|_{S \setminus \{q\}}$ extends to 
$\ph_j \in \Diff(S,\omega)$ by $\ph_j(q) := q$. 
Moreover $\ph_j \in \Ham(S, \omega)$, since 
$\supp (\ph^{-1} \circ \ph_j) \subset U$ and $U$ is diffeomorphic to $\R^2$.
Then, the sequence $(\ph_j)_j$ satisfies the requirements in Theorem \ref{151207_1}. 

\textbf{The case $q$ is elliptic} 

We assumed that the eigenvalues of $d\ph(q)$ are in $\{z \in \C \mid z^3 \ne 1, \, z^4 \ne 1 \}$. 
Then, there exists a local chart $(X,Y)$ near $q$ such that 
$q=(0,0)$, $\omega = dX \wedge dY$ and
\[
\ph(X,Y) = (\cos \theta(X,Y) X  - \sin \theta(X,Y) Y,  \sin \theta(X, Y) X + \cos \theta(X,Y) Y) + O_4(X,Y),
\]
where $\theta(X,Y)= \theta_0 + \theta_1 (X^2 + Y^2)$ ($\theta_0, \theta_1$ are real constants), 
and 
$O_4$ is a real-analytic map whose expansion involves terms of order $\ge 4$ only 
(see \cite{Siegel_Moser} Section 32 and Section 23, pp.172--173). 

By adding a $C^\infty$ small perturbation to $\ph$, we may assume that $\theta_1 \ne 0$. 
Then, there exists a neighborhood $D$ of $q$ which is diffeomorphic to $D^2$, 
preserved by $\ph$ and sufficiently close to $q$ such that $D \cap \bar{U} = \emptyset$ 
(see \cite{Siegel_Moser} Section 34). 
$\partial D$ is a so called KAM invariant circle. 

Again by Lemmas \ref{151205_1} and \ref{151208_1}, 
one can apply Lemma \ref{151207_2} to conclude that 
there exists a sequence $(\ph'_j)_j$ in $\Diff(S \setminus D, \omega)$ such that 
$\mca{P}(\ph'_j) \cap U \ne \emptyset$, 
$\supp (\ph^{-1} \circ \ph'_j) \subset U$ for every $j$, and 
$\lim_{j \to \infty} \ph'_j = \ph|_{S \setminus D}$. 
Every $\ph'_j$ extends to $\ph_j \in \Diff(S, \omega)$
by setting $\ph_j| _D:= \ph|_D$. 
$\ph_j \in \Ham(S, \omega)$ 
since $\supp (\ph^{-1} \circ \ph_j) \subset U$ and $U$ is diffeomorphic to $\R^2$.
The sequence $(\ph_j)_j$ satisfies the requirements in Theorem \ref{151207_1}. 
\qed

\textbf{Acknowledgements.} 
We are grateful to the referees for their comments which are very helpful to improve presentations of the paper. 
We also appreciate Kenji Fukaya, Viktor Ginzburg, Yi-Jen Lee and Kaoru Ono for useful conversations. 
MA is supported by JSPS Grant-in-Aid for Scientific Research (C) (26400085). 
KI is supported by JSPS Grant-in-Aid for Young Scientists (B) (25800041), 
he also acknowledges the Simons Center for Geometry and Physics at Stony Brook University for the great working environment. 

\end{document}